\newcommand{\C}{\mathds{C}}
\def\cal{\mathcal} 
\newtheorem{thm}{Theorem}[section]
\newtheorem{cor}[thm]{Corollary}
\newtheorem{prop}[thm]{Proposition}
\newtheorem{lem}[thm]{Lemma}
\theoremstyle{definition}
\newtheorem{defn}[thm]{Definition}
\theoremstyle{remark}
\theoremstyle{definition}
\theoremstyle{definition}
\theoremstyle{definition}
\numberwithin{equation}{section}
\def\Saff{{\boldsymbol S}}
\def\Sp{{\mathbb S}}
\def\C{{\mathbb C}}
\def\ssm{\smallsetminus}
\def\Caff{{\boldsymbol C}}
\def\Z{{\mathbb Z}}
\def\R{{\mathbb R}}
\def\Uaff{\boldsymbol U}
\def\T{{\rm T}}
\def\Cdil{{\boldsymbol{\cal E}}}
\def\bif{{\cal B}}
\def\reg{{\cal R}}
\def\aut{{\rm Aut}}
\def\and{\quad \text{and}\quad}
\title{Conjugacies of geodesic flows in affine cylinders and tori}
\author{Xavier Buff}
\address[Xavier Buff]{Institut Math\'{e}matique de Toulouse}
\email{xavier.buff@math.univ-toulouse.fr}
\author[G.~Tahar]{Guillaume Tahar}
\address[Guillaume Tahar]{Beijing Institute of Mathematical Sciences and Applications, Huairou District, Beijing, China}
\email{guillaume.tahar@bimsa.cn}
\begin{document}
\maketitle
\setcounter{tocdepth}1

\begin{abstract}
Affine cylinders (genus zero surfaces with two singularities) and affine tori (genus one surfaces without singularities) are among the simplest examples of surfaces endowed with a complex affine structure. Their geodesic flows are particularly tractable. In this article, we provide explicit necessary and sufficient conditions under which the geodesic flows on such surfaces are conjugate, in the topological and in the holomorphic category.
\end{abstract}

\section{Introduction}

The geodesic flow on a surface endowed with a complex affine structure can be viewed as a natural generalization of the directional flows on translation and dilation surfaces. The latter have been extensively studied over the past decades, notably through their deep connections with the theory of interval exchange transformations (see \cite{G17,Zor} for background). Beyond these particularly interesting subclasses, an unexpected link with two-dimensional holomorphic dynamics motivates the study of more general complex affine structures. Indeed, given a homogeneous polynomial vector field on $\C^2$, the projection of real-time trajectories to $\mathbb{CP}^1$ are either points or geodesics with respect to some complex affine structure on $\mathbb{CP}^1$ minus finitely many points (see \cite{AT11,GuillotRebelo}).
\par
In this article, we formulate the problem of conjugacy for geodesic flows on complex affine surfaces and provide a solution, for both topological and holomorphic conjugacies, within a restricted class of affine structures.


\begin{defn}
An \textit{affine surface} is a Riemann surface with an atlas $(\zeta_i:U_i\to \C)$ whose change of charts are complex affine maps, i.e., such that $\zeta_j = \lambda_{i,j} \zeta_i +\mu_{i,j}$ on $U_i\cap U_j$, with $(\lambda_{i,j},\mu_{i,j})\in \bigl(\C\ssm \{0\}\bigr)\times \C$.
\end{defn}

If $\Saff$ is an affine surface, we denote by $\Sp$ the underlying Riemann surface and we say that $\Saff$ is an affine structure on $\Sp$. Here are some examples of affine surfaces: 
\begin{itemize}
    \item the affine plane $\Caff$ whose underlying Riemann surface is the complex plane equipped with the canonical chart $z$;
    \item for any $\mu \in \C\ssm \{0\}$, the flat cylinder $\Caff/\mu\Z$ is the quotient of $\Caff$ by the group of translations generated by $z\mapsto z+\mu$; 
    \item for any pair $(\mu,\nu) \in \C^2$ of complex numbers which are $\R$-linearly independent, the flat torus $\Caff/(\mu\Z\oplus \nu \Z)$ is the quotient of $\Caff$ by the group of translations generated by $z\mapsto z+\mu$ and $z\mapsto z+\nu$.
\end{itemize}

\begin{defn}
A map $\phi:\Saff_1\to \Saff_2$ between two affine surfaces $\Saff_1$ and $\Saff_2$ is an \textit{affine map} if for any local affine charts $\zeta_1$ near $s_1\in \Saff_1$ and $\zeta_2$ near $s_2 := \phi(s_1)$ in $\Saff_2$, we locally have that $\zeta_2\circ \phi := \lambda \zeta_1 + \mu$ with $(\lambda,\mu)\in \bigl(\C\ssm \{0\}\bigr)\times \C$.
\end{defn}

\begin{defn}
A \textit{geodesic} in an affine surface $\Saff$ is a curve $\delta:I\to \Saff$ defined on an interval $I\subseteq \R$ which is the restriction of an affine map $\phi:\Uaff\to \Saff$ defined on some neighborhood $\Uaff$ of $I$ in $\Caff$. 
\end{defn}

If $\delta_1:I_1\to \Saff$ and $\delta_2:I_2\to \Saff$ are two geodesics which coincide on $I_1\cap I_2\neq \emptyset$, then the curve $\delta:I_1\cup I_2\to \Saff$ which coincides with $\delta_1$ on $I_1$ and with $\delta_2$ on $I_2$ is a geodesic. 
From now on, we always assume that geodesics are defined on maximal intervals.

Let us denote by $\T\Saff$ the tangent bundle of $\Saff$. 
If $\delta:I\to \Saff$ is a geodesic, we denote as $\dot \delta(t)\in \T_{\delta(t)}\Saff\ssm \{0\}$ the tangent vector to $\delta$ at $t$. We define in such a way a curve $\dot\delta:I\to \T\Saff$ which does not vanish. 
Let us denote by $\T\Saff^*$ the set of nonzero tangent vectors in $\T\Saff$. For any $v\in \T\Saff^*$, there is a unique geodesic $\delta_v:I_v\to \R$ defined on a (maximal) interval containing $0$ such that  $\dot\delta_v(0) = v$. For $t\in I_v$, we set 
\[F_\Saff^t(v) := \dot \delta_v (t)\in \T\Saff^*.\]
This defines a local $\R$-action on $\T\Saff^*$ called the \textit{geodesic flow}. By construction, orbits of the geodesic flow in $\T\Saff^*$ correspond to geodesics in $\Saff$.  Those orbits are leaves of a $1$-real-dimensional oriented foliation of $\T\Saff^*$ generated by the geodesic flow. 

\begin{defn}
The geodesic flow $F_{\Saff_1}^t$ on $\Saff_1$ is {\em topologically/holomorphically conjugate} to the geodesic flow $F_{\Saff_2}^t$ on $\Saff_2$ if there is a homeomorphism/biholomorphism $\Phi:\T\Saff_1^*\to \T\Saff_2^*$ such that for all $v \in \T\Saff_1^*$, we have that $F_{\Saff_1}^t(v)$ is defined if and only if $F_{\Saff_2}^{t}\circ \Phi(v)$ is defined and equal to $\Phi\circ F_{\Saff_1}^{t}(v)$. 
\end{defn}

We would like to understand under which conditions the geodesic flows on two affine surfaces $\Saff_1$ and $\Saff_2$ are topologically/holomorphically conjugate. When $\Saff_1$ and $\Saff_2$ are isomorphic (as affine surfaces), the geodesic flows are holomorphically conjugate. One may think that the situation is rigid and that the converse is true. However, we will see that this is not always the case. 

In this article, we focus on the elementary case of affine cylinders and affine tori. More precisely, let $\Cdil$ be the the \textit{exponential-affine plane}, i.e., the affine structure on $\C$, whose affine charts are the restrictions of the map $\exp(z)$. The map $\exp:\Cdil\to \Caff\ssm\{0\}$ is a universal cover of affine surfaces. The affine automorphisms of $\Caff\ssm\{0\}$ are the maps $z\mapsto \lambda z$ with $\lambda\in \C\ssm \{0\}$. It follows that the group $\aut(\Cdil)$ of affine automorphisms of $\Cdil$ consists of the maps $z\mapsto z+\mu$ with $\mu \in \C$. 

\begin{defn}
An {\em affine cylinder} is an affine surface isomorphic to the quotient surface $\Cdil/\mu \Z$ for some $\mu\in \C\ssm \{0\}$. An {\em affine torus} is an affine surface isomorphic to the quotient $\Cdil/(\mu\Z\oplus \nu\Z)$, where $(\mu,\nu)\in \C^2$ are $\R$-linearly independent.
\end{defn}

Note that the flat cylinder $\Caff/\mu\Z$ and the affine cylinder $\Cdil/\mu\Z$ are affine structures on the same Riemann surface $\C/\mu\Z$, but they are not isomorphic as affine surfaces. Similarly, the flat torus $\Caff/(\mu\Z\oplus \nu \Z)$ and the affine torus $\Cdil/(\mu\Z\oplus \nu \Z)$ are affine structures on the same Riemann surface $\C/(\mu\Z\oplus \nu \Z)$, but they are not isomorphic as affine surfaces.

The main ingredient to characterize conjugacy classes between geodesic flows on affine cylinders or affine tori $\Saff$ is the automorphism $h_\Saff:\Saff\to \Saff$ induced by the automorphism $z\mapsto z+2\pi {\rm i}$ on $\Cdil$.

Our main result is the following. 

\begin{thm}\label{thm:main}
    Let $\Saff_1$ and $\Saff_2$ be affine surfaces obtained as the quotient of $\Cdil$ by a discrete subgroup of $\aut(\Cdil)$. Let $h_{\Saff_1}:\Saff_1\to \Saff_1$ and $h_{\Saff_2}:\Saff_2\to \Saff_2$ be the automorphisms induced by the automorphism $z\mapsto z+2\pi {\rm i}$ on $\Cdil$. The geodesic flow on $\Saff_1$ is topologically/holomorphically conjugate to the geodesic flow on $\Saff_2$ if and only if  $h_{\Saff_1}$ is topologically/holomorphically conjugate to $h_{\Saff_2}$. 
\end{thm}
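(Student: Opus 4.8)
The plan is to put the geodesic flow into an explicit normal form from which $h_{\Saff}$ can be read off, and then treat the two implications separately. First I would lift everything to $\Cdil$. Writing $\T\Cdil^{*}=\C_{z}\times\C^{*}_{w}$ and setting $u:=1/w$, a direct computation (equivalently, transport through the affine covering $\exp$, under which the geodesic flow becomes the translation flow on $\T(\Caff\ssm\{0\})^{*}$) shows that the geodesic flow is
\[
F^{t}(z,u)=\bigl(z+\log(1+t/u),\,u+t\bigr),
\]
defined exactly while $u+t\neq 0$. Since $\aut(\Cdil)$ acts by $z\mapsto z+\gamma$ and fixes $w$, hence $u$, quotienting by the defining group $\Gamma$ yields a biholomorphism $\T\Saff^{*}\cong\Sp\times\C^{*}_{u}$, where $\Sp=\C/\Gamma$ is the underlying Riemann surface, under which $F^{t}$ has the same formula and $h_{\Saff}$ acts as an automorphism of the first factor alone. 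On each half-plane $\{\pm\Ima u>0\}$ the quantity $b:=z-\log u$ is flow-invariant, so there the flow is the trivial product of $\Sp$ with the horizontal-translation flow on $\HB$; the whole of $\T\Saff^{*}$ is recovered by gluing the two slabs across the real axis $u\in\R^{*}$ (the incomplete orbits), trivially over $u>0$ and by $h_{\Saff}$ over $u<0$, because $\log u$ jumps by $2\pi{\rm i}$ there. This ``two slabs glued by $h_{\Saff}$'' picture drives both directions.

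For the implication that a conjugacy of the $h_{\Saff}$ produces a conjugacy of flows, suppose $\psi\colon\Sp_{1}\to\Sp_{2}$ is a homeomorphism (resp.\ biholomorphism) with $\psi\circ h_{\Saff_{1}}=h_{\Saff_{2}}\circ\psi$. I would set $\Phi$ to be $\psi$ on the fiber and the identity on $u$, expressed in the $b$-coordinate on each slab. On each slab the flow fixes $b$ and translates $u$, so $\Phi$ intertwines the flows and preserves the incomplete locus; the relation $\psi h_{\Saff_{1}}=h_{\Saff_{2}}\psi$ is precisely the compatibility making the two slab-formulas agree along $u<0$ (agreement along $u>0$ being automatic). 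Thus $\Phi$ is a well-defined flow-conjugacy, continuous across the real axis; in the holomorphic case it is holomorphic on each slab and continuous across a real hypersurface, hence holomorphic by Painlev\'e's theorem.

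For the converse, given a flow-conjugacy $\Phi$ I would reconstruct $(\Sp,h_{\Saff})$ from flow-invariant data. The splitting into complete and incomplete orbits is intrinsic, so $\Phi$ preserves it and matches up the two slabs (possibly swapping them). On each slab the flow is a trivial product, so its leaf space is $\Sp\times(0,\infty)$ with the $(0,\infty)$-factor recording approach to the incomplete locus; the limit as this parameter tends to $0$ returns a copy of $\Sp$, and the gluing of the two limit copies over the negative reals returns $h_{\Saff}$. Hence $\Phi$ descends to the sought $\psi$, up to the slab-swap, which replaces $h_{\Saff}$ by $h_{\Saff}^{-1}$; this is harmless, since in each of the three cases $z\mapsto-z$ conjugates $h_{\Saff}$ to $h_{\Saff}^{-1}$. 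In the holomorphic category one strengthens this: a biholomorphism conjugating the real flow also conjugates its generating holomorphic vector field, hence the complexified flow $u\mapsto u+\tau$, $\tau\in\C$, whose leaf space together with its monodromy around $u=0$ is exactly the pair $(\Sp,h_{\Saff})$; this makes $\psi$ holomorphic.

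The main obstacle is the canonicity underlying this converse: showing that an a priori arbitrary flow-conjugacy must respect the product structure of each slab and induce a well-defined limiting map between the boundary fibers that genuinely intertwines $h_{\Saff}$. The delicate points are that the $(0,\infty)$-parameter and the limiting fiber $\Sp$ are intrinsic to the flow rather than artifacts of the chosen coordinates, and that the reconstructed identification over $u<0$ is really $h_{\Saff}$; verifying these while handling the topological and holomorphic regularity of $\psi$ uniformly is where the substantive work lies.
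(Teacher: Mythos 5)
Your normal form and your forward implication are, up to the substitution $u\mapsto 1/u$, exactly the paper's own construction: in the flow-invariant coordinate $b=z+\log u$ the map you describe is the paper's $\Psi(z,u)=\bigl(\phi(z+\log u)-\log u,\,u\bigr)$, and the compatibility of the two slab formulas along $u<0$ is precisely the identity $\phi_{\mu+2\pi{\rm i}}=\phi_\mu$ that follows from $\phi\circ h_{\Saff_1}=h_{\Saff_2}\circ\phi$. That half is correct and complete, including the remark that a slab swap only replaces $h_{\Saff_2}$ by $h_{\Saff_2}^{-1}$, which $z\mapsto -z$ repairs (the paper's Lemma~\ref{lem:Inversion}).

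The converse, however, has a genuine gap, and it is the one you flag yourself. You propose to recover $(\Sp,h_\Saff)$ from the leaf space of the flow on each open slab $\{\pm\Ima u>0\}$, identified with $\Sp\times(0,\infty)$, by letting the second parameter tend to $0$. But a flow conjugacy only induces \emph{some} homeomorphism of these leaf spaces: on a single slab the flow is a trivial product, so its self-conjugacies correspond to arbitrary homeomorphisms of the leaf space, most of which respect neither the product structure nor the $(0,\infty)$-factor and admit no boundary limit. The regular locus alone therefore cannot see $h_\Saff$, and the ``limit as the parameter tends to $0$'' is not defined from the data you have. The constraint must be read off the bifurcation locus itself, where $\Phi$ is already defined and no limiting process on leaf spaces is needed. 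This is how the paper closes the argument: since a conjugacy preserves the exact escape time, it sends each slice $\bif^\tau(\Saff_1)$ (an affine copy of $\Saff_1$) onto $\bif^\tau(\Saff_2)$; for $\tau_1<0<\tau_2$ and $t=\tau_2-\tau_1$, the two one-sided limits $F^{t,\pm}_{\Saff}:\Saff^{\tau_2}\to\Saff^{\tau_1}$ of $F^t_{\Saff}$ from $\reg^\pm(\Saff)$ are intertwined by $\Phi$ by a direct continuity argument, and $(F^{t,-}_{\Saff})^{-1}\circ F^{t,+}_{\Saff}={\rm D}h_{\Saff}$. Restricting $\Phi$ to a single slice then yields the desired conjugacy of $h_{\Saff_1}$ with $h_{\Saff_2}^{\pm 1}$, holomorphic whenever $\Phi$ is because each slice is a complex submanifold; this also removes the need for your appeal to the complexified flow and its monodromy in the holomorphic case.
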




The automorphism $z\mapsto z+2\pi {\rm i}$ on the affine cylinder $\Cdil/\mu\Z$ is holomorphically conjugate to $z\mapsto z+\frac{2\pi {\rm i}}\mu$ on the  cylinder $\C/\Z$. It follows that we have the following classification in the case of affine cylinders. 

\begin{prop}\label{prop:cylinders}
Let $\Saff_1 := \Cdil/\mu_1\Z$ and $\Saff_2:= \Cdil/\mu_2\Z$ be two affine cylinders. The geodesic flows on $\Saff_1$ and $\Saff_2$ are holomorphically conjugate if and only if $\frac{2\pi {\rm i}}{\mu_2}-\frac{2\pi {\rm i}}{\mu_1}\in \Z$ or $\frac{2\pi {\rm i}}{\mu_2}+\frac{2\pi {\rm i}}{\mu_1}\in \Z$. The geodesic flows on $\Saff_1$ and $\Saff_2$ are topologically conjugate if and only if 
\begin{itemize}
    \item either they are holomorphically conjugate;
    \item or both $\Re(\mu_1)\neq 0$ and $\Re(\mu_2)\neq 0$.
\end{itemize}
\end{prop}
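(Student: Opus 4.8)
The plan is to combine Theorem~\ref{thm:main} with the holomorphic model stated just above it. By the theorem, the two geodesic flows are conjugate (in either category) if and only if $h_{\Saff_1}$ and $h_{\Saff_2}$ are conjugate, and each $h_{\Saff_j}$ is holomorphically conjugate to the translation $T_j\colon z\mapsto z+\alpha_j$ on $\C/\Z$ with $\alpha_j := 2\pi{\rm i}/\mu_j$. Since this model conjugacy is holomorphic it is a fortiori topological, so the problem reduces, in both categories at once, to comparing $T_1$ and $T_2$. I would then pass to the exponential coordinate $w=e^{2\pi{\rm i}z}$, a biholomorphism $\C/\Z\to\C\ssm\{0\}$ carrying $T_j$ to the multiplication map $M_j\colon w\mapsto\rho_j w$ with $\rho_j := e^{2\pi{\rm i}\alpha_j}$. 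A direct computation gives $|\rho_j|=e^{-2\pi\,\Ima(\alpha_j)}$ and $\Ima(\alpha_j)=2\pi\Re(\mu_j)/|\mu_j|^2$, so $|\rho_j|=1$ if and only if $\Re(\mu_j)=0$, which is exactly the dichotomy in the statement. It remains to classify multiplication maps of $\C\ssm\{0\}$ up to holomorphic and topological conjugacy.

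For the holomorphic part I would use that every biholomorphism of $\C\ssm\{0\}$ has the form $w\mapsto cw$ or $w\mapsto c/w$. Conjugating $M_j$ by these yields multipliers $\rho_j$ and $\rho_j^{-1}$ respectively, so $M_1$ and $M_2$ are holomorphically conjugate if and only if $\rho_2\in\{\rho_1,\rho_1^{-1}\}$. Unwinding $\rho_j=e^{2\pi{\rm i}\alpha_j}$, this is precisely $\alpha_2-\alpha_1\in\Z$ or $\alpha_2+\alpha_1\in\Z$, i.e. the stated condition on $\mu_1,\mu_2$.

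For the topological part I would split on $|\rho_j|$. When $|\rho_1|\ne1$ and $|\rho_2|\ne1$, writing $w=e^{s+{\rm i}\theta}$ identifies $\C\ssm\{0\}$ with $\R\times(\R/2\pi\Z)$ and $M_j$ with the translation $(s,\theta)\mapsto(s+\sigma_j,\theta+\tau_j)$, where $\sigma_j=\log|\rho_j|\ne0$. The shear $(s,\theta)\mapsto(s,\theta-(\tau_j/\sigma_j)s)$ untwists the rotation, reducing $M_j$ to the pure radial translation $(s,\theta)\mapsto(s+\sigma_j,\theta)$, and rescaling $s$ by $\sigma_2/\sigma_1$ conjugates the two; hence all such maps are topologically conjugate, which gives the second bullet. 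When exactly one of the $|\rho_j|$ equals $1$, the maps cannot be conjugate: for $|\rho|=1$ every orbit lies in a circle $\{|w|=\text{const}\}$ and so has compact closure in $\C\ssm\{0\}$, whereas for $|\rho|\ne1$ no orbit does, and relative compactness of orbit closures is a conjugacy invariant.

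The remaining and hardest case is $|\rho_1|=|\rho_2|=1$, where I must prove the rigidity statement that topological conjugacy already forces $\rho_2=\rho_1^{\pm1}$, so that here topological and holomorphic conjugacy coincide (matching the first bullet together with the vanishing of both $\Re(\mu_j)$). Now $M_j$ is the rotation $(s,\theta)\mapsto(s,\theta+\tau_j)$, and I would argue in two regimes. If $\tau_j/2\pi$ is irrational, the closure of each orbit is exactly the invariant circle through the point; a conjugacy preserves orbit closures, hence sends circles to circles and restricts on each to a conjugacy of irrational rotations, so Poincar\'e's invariance of the rotation number (up to orientation) gives $\rho_2=\rho_1^{\pm1}$. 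If $\tau_j/2\pi=a_j/q$ is rational in lowest terms, $M_j$ is a free periodic homeomorphism of order $q$, and $\C\ssm\{0\}\to(\C\ssm\{0\})/\langle M_j\rangle\cong\C\ssm\{0\}$ is a $q$-fold cyclic cover; a conjugacy descends to the base, and since any self-homeomorphism of $\C\ssm\{0\}$ acts on $H_1=\Z$ by $\pm1$, the induced automorphism of the deck group $\Z/q$ is $\times(\pm1)$, forcing $a_2\equiv\pm a_1\pmod q$ and again $\rho_2=\rho_1^{\pm1}$. Assembling the three cases yields exactly the two bullet points. I expect this last case, the topological rigidity of rotations of $\C\ssm\{0\}$, to be the main obstacle, with the rational subcase in particular requiring the covering-space and deck-group argument rather than a naive rotation-number comparison.
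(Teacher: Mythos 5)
Your proposal is correct, and it reaches the conclusion by a genuinely different route from the paper on the key rigidity step. Both arguments start from Theorem~\ref{thm:main} and the model of $h_{\Saff_j}$ as the translation by $2\pi{\rm i}/\mu_j$ on $\C/\Z$, but the paper then stays on the universal cover: it lifts an arbitrary conjugacy to $\Phi:\Cdil\to\Cdil$ with $\Phi(z+\mu_1)=\Phi(z)\pm\mu_2$ and $\Phi(z+2\pi{\rm i})=\Phi(z)+2\pi{\rm i}+k\mu_2$, reads off the holomorphic case from the explicit form $\Phi(z)=\pm\frac{\mu_2}{\mu_1}z+\tau$, and in the topological case with $\Re(\mu_1)=0$ runs a short Diophantine limit argument (evaluate $\Phi$ along $q_n2\pi{\rm i}-p_n\mu_1\to 0$ and use continuity at one point) to force $\Re(\mu_2)=0$ and the integrality condition --- in effect a rotation-number computation done by hand, with no external input. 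You instead push everything to multiplication maps $w\mapsto\rho_j w$ on $\C\ssm\{0\}$ and invoke classical circle dynamics: Poincar\'e's invariance of the rotation number for the irrational case and a deck-group argument on the $q$-fold cover $w\mapsto w^q$ for the rational case. Your treatment of the remaining cases matches the paper in substance: the shear-and-rescale conjugacy when both $|\rho_j|\neq 1$ is the same real-linear map the paper writes down in the basis $(\mu_j,2\pi{\rm i})$, and your compact-orbit-closure obstruction when exactly one $|\rho_j|=1$ is subsumed in the paper's limit argument. The trade-off is that the paper's proof is shorter and self-contained, while yours isolates the conceptual invariant (the multiplier $\rho_j$, i.e.\ the rotation number of $h_{\Saff_j}$) at the cost of citing Poincar\'e's theorem and a covering-space lemma; your rational subcase does need the deck-group argument you describe, since a naive rotation-number comparison would only determine $a_j/q$ modulo the ambiguity you resolve via the action on $H_1$.
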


\begin{defn}
A geodesic $\delta:I\to \Saff$ is a {\em closed geodesic} if $\delta(t_1) = \delta(t_2)$ and $\dot \delta(t_1) = \lambda \dot \delta(t_2)$ for some $t_1\neq t_2$ in $I$ and some $\lambda \in (0,+\infty)$. The geodesic is {\em periodic} if in addition $\lambda = 1$.
\end{defn}

If $\delta$ is a periodic geodesic, then $\dot \delta$ is a periodic curve in $\T\Saff^*$. It follows that having periodic geodesics is invariant under holomorphic/topological conjugacy of the geodesic flows. It came to us as a surprise that having closed geodesics is not an invariant of holomorphic/topological conjugacy of the geodesic flows. 

Indeed, the affine cylinder $\Cdil/\mu\Z$ contains closed geodesics if and only if $\mu \in \R\ssm \{0\}$. In that case, the curve $\delta:(0,+\infty)\to \Cdil/\mu\Z$ defined by $\delta(t) = \log t$ satisfies $\delta({\rm e}^\mu t) = \delta(t)$ and $\dot \delta ({\rm e}^\mu t) = {\rm e}^\mu \dot \delta(t)$. The previous result shows that the geodesic flows on $\Cdil/\Z$ and $\Cdil/\mu\Z$ with $\mu = \frac{2\pi{\rm i}}{2\pi {\rm i} -1}$ are holomorphically conjugate. However, $\Cdil/\Z$ has closed geodesics and $\Cdil/\mu \Z$ does not have closed geodesics since $\mu\not\in \R\ssm \{0\}$.

In order to describe the classification for affine tori, we need to remind some classical notions. 

\begin{defn}
If $\Sp_1$ and $\Sp_2$ are Riemann surfaces with marked points $(A_1,B_1)\in \Sp_1^2$ and $(A_2,B_2)\in \Sp_2^2$, we say that the marked Riemann surfaces  $(\Sp_1;A_1,B_1)$ and $(\Sp_2;A_2,B_2)$ are {\em biholomorphic} if there is a holomorphic isomorphism $\phi:\Sp_1\to \Sp_2$ such that $\phi(A_1) = A_2$ and $\phi(B_1)=B_2$. 
\end{defn}

\begin{defn}
A diffeomorphism $\phi:\Sp_1\to \Sp_2$ between two complex tori $\Sp_1:=\C/\Gamma_1$ and $\Sp_2:=\C/\Gamma_2$ is {\em real-affine} if it lifts to a diffeomorphism $\Phi:\C\to \C$ of the form $\Phi(z) = L(z) + \mu$ for some $\R$-linear map $L:\C\to \C$ and some complex number $\mu\in \C$.  
\end{defn}

\begin{defn}
Two marked complex tori $(\Sp_1;A_1,B_1)$ and $(\Sp_2;A_2;B_2)$ are {\em real-affine equivalent} if there is a real-affine diffeomorphism $\phi:\Sp_1\to \Sp_2$ such that $\phi(A_1) = A_2$ and $\phi(B_1) = B_2$. 
\end{defn}

\begin{prop}\label{prop:tori}
Let $\Saff_1 := \Cdil/\Gamma_1$ and $\Saff_2:= \Cdil/\Gamma_2$ be two affine tori and consider the associated complex tori $\Sp_1:=\C/\Gamma_1$ and $\Sp_2:=\C/\Gamma_2$. For $j\in \{1,2\}$, let $A_j$ be the class of $0$ in $\Sp_j$ and let $B_j$ be the class of $2\pi {\rm i}$ in $\Sp_j$.
The geodesic flows on $\Saff_1$ and $\Saff_2$ are holomorphically conjugate if and only if the marked tori $(\Sp_1;A_1,B_1)$ and $(\Sp_2;A_2,B_2)$ are biholomorphic. 
The geodesic flows on $\Saff_1$ and $\Saff_2$ are topologically conjugate if and only if the marked tori $(\Sp_1;A_1,B_1)$ and $(\Sp_2;A_2,B_2)$ are real-affine equivalent.
\end{prop}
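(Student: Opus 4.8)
The plan is to deduce everything from Theorem~\ref{thm:main}, which reduces the conjugacy of the geodesic flows to the conjugacy of the induced automorphisms $h_{\Saff_1}$ and $h_{\Saff_2}$. Since $\Saff_j=\Cdil/\Gamma_j$, the map $h_{\Saff_j}$ is exactly the translation by $B_j$ on the complex torus $\Sp_j=\C/\Gamma_j$, viewed as an abelian group with neutral element $A_j$ (the class of $0$) and $B_j$ the class of $2\pi{\rm i}$. So it suffices to prove that $h_{\Saff_1}$ and $h_{\Saff_2}$ are holomorphically (resp. topologically) conjugate if and only if $(\Sp_1;A_1,B_1)$ and $(\Sp_2;A_2,B_2)$ are biholomorphic (resp. real-affine equivalent).

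The easy direction is common to both categories. A biholomorphism (resp. real-affine equivalence) of the marked tori is induced by a $\C$-linear (resp. $\R$-linear) isomorphism $L:\C\to\C$ with $L\Gamma_1=\Gamma_2$: the condition $\phi(A_1)=A_2$ forces the translation part to vanish, so $\phi$ descends from $L$, and $\phi(B_1)=B_2$ reads precisely $L(2\pi{\rm i})\equiv 2\pi{\rm i}\pmod{\Gamma_2}$. With these two properties one checks directly that $\phi\circ h_{\Saff_1}=h_{\Saff_2}\circ\phi$, which gives a holomorphic (resp. topological, since a real-affine diffeomorphism is in particular a homeomorphism) conjugacy.

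For the converse I would start from a conjugacy $\psi:\Sp_1\to\Sp_2$ (biholomorphic in the first case, a homeomorphism in the second) and lift it to $\Psi:\C\to\C$ with $\pi_2\circ\Psi=\psi\circ\pi_1$. Quasi-periodicity over the torus yields a group homomorphism $L:\Gamma_1\to\Gamma_2$ with $\Psi(z+\lambda)=\Psi(z)+L(\lambda)$ for $\lambda\in\Gamma_1$, which is an isomorphism (lifting $\psi^{-1}$ produces the inverse), while the conjugacy relation lifts to $\Psi(z+2\pi{\rm i})=\Psi(z)+2\pi{\rm i}+\gamma_0$ for a fixed $\gamma_0\in\Gamma_2$, constant by connectedness of $\C$. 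In the holomorphic case $\Psi$ is entire and $\Psi'$ is $\Gamma_1$-periodic, hence bounded, so $\Psi(z)=az+b$ by Liouville; the two relations then give $a\Gamma_1=\Gamma_2$ and $a\cdot 2\pi{\rm i}\equiv 2\pi{\rm i}\pmod{\Gamma_2}$, and $z\mapsto az$ descends to the desired biholomorphism of marked tori.

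In the topological case I would extend $L$ to an $\R$-linear isomorphism of $\C$ (still written $L$) and set $u:=\Psi-L$. Then $u$ is $\Gamma_1$-periodic, so it descends to a continuous, hence bounded, map on the compact torus $\Sp_1$; subtracting the two relations above gives $u(z+2\pi{\rm i})-u(z)=C$ with $C:=2\pi{\rm i}+\gamma_0-L(2\pi{\rm i})$ constant. Iterating yields $u(z+2\pi{\rm i}\,n)=u(z)+nC$, and boundedness of $u$ forces $C=0$, i.e. $L(2\pi{\rm i})=2\pi{\rm i}+\gamma_0\equiv 2\pi{\rm i}\pmod{\Gamma_2}$, so $L$ descends to a real-affine equivalence of the marked tori. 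The step I expect to be the crux is precisely this last one: a priori one fears a case analysis according to whether $B_j$ is a torsion point, generates a dense line, or is totally irrational, but the single boundedness argument for the periodic remainder $u$ disposes of all cases at once. Combining the two directions with Theorem~\ref{thm:main} completes the proof.
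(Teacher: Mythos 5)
Your proposal is correct, and it follows the paper's overall scheme: reduce via Theorem~\ref{thm:main} to deciding when the translations $h_{\Saff_1}$ and $h_{\Saff_2}$ are conjugate, then lift conjugacies to quasi-periodic maps $\Psi:\C\to\C$ satisfying $\Psi(z+\lambda)=\Psi(z)+L(\lambda)$ for $\lambda\in\Gamma_1$ and $\Psi(z+2\pi{\rm i})=\Psi(z)+2\pi{\rm i}+\gamma_0$. The easy direction and the holomorphic converse agree in substance with the paper's (the paper simply asserts that a lift of a biholomorphism fixing $0$ is $z\mapsto\alpha z$, where you derive it from Liouville; the paper also normalizes conjugacies to send $A_1$ to $A_2$ by composing with a translation, whereas you skip this because your final map discards the constant term anyway). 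The genuine divergence is in the topological converse, which is the crux. The paper distinguishes whether the orbit of $A_1$ under $h_{\Saff_1}$ is periodic, dense in a circle, or dense in $\Sp_1$, extracts integer sequences with $k_j2\pi{\rm i}-m_j\mu_1-n_j\nu_1\to 0$, pushes them through the lift $\Phi$ and through the linear map $L$ separately, and subtracts to force $L(2\pi{\rm i})=2\pi{\rm i}+\gamma$. You instead decompose $\Psi=L+u$ with $u$ a $\Gamma_1$-periodic, hence bounded, remainder, and kill the defect $C=2\pi{\rm i}+\gamma_0-L(2\pi{\rm i})$ from the cocycle relation $u(z+2\pi{\rm i}\,n)=u(z)+nC$. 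This is uniform over the three orbit types, avoids the approximation sequences entirely, and is arguably cleaner; the paper's route makes the underlying dynamical mechanism (recurrence of the translation orbit) more visible. The only points you leave implicit---that $L(\mu_1)$ and $L(\nu_1)$ generate $\Gamma_2$, hence are $\R$-linearly independent, so the $\R$-linear extension of $L$ is invertible and descends to the quotient---are immediate.
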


\paragraph{Organization of the paper:}
\begin{itemize}
    \item In Section~\ref{sec:Geodesicflow}, we describe the geodesic flow in the exponential-affine plane $\Cdil$ and its quotients.
    \item In Section~\ref{sec:Invariant}, we show that for any quotient $\Cdil/\Gamma$, the topological/holomorphic conjugacy class of the automorphism induced by $z\mapsto z+2\pi {\rm i}$ on $\Cdil$ is an invariant of the topological/holomorphic conjugacy class of the geodesic flow. 
    \item In Section~\ref{sec:Explicit}, we construct topological/holomorphic conjugacies of the geodesics flows having the same topological/holomorphic invariant and prove Theorem~\ref{thm:main}.
    \item In Section~\ref{sec:cylinders}, we deduce Proposition~\ref{prop:cylinders} from Theorem~\ref{thm:main}.
    \item In Section~\ref{sec:tori}, we deduce Proposition~\ref{prop:tori} from Theorem~\ref{thm:main}.
\end{itemize}

\paragraph{\bf Acknowledgements.}
This research was conducted during a semester on Holomorphic Dynamics and Geometry of Surfaces funded by the ANR LabEx CIMI (grant
ANR-11-LABX-0040) within the French State program “Investissements d’Avenir”. 
The research of both authors is supported by the French National Research Agency under the project TIGerS (ANR-24-CE40-3604).
Research by G.T. is also supported by the Beijing Natural Science Foundation (Grant IS23005).

\section{Description of the geodesic flow}\label{sec:Geodesicflow}

Remember that $\Caff$ is the canonical affine structure on $\C$. The tangent space $\T\Caff$ naturally identifies to $\C^2$, the canonical projection $\T\Caff\to \Caff$ identifying with $(z,u)\mapsto z$. The geodesic flow $F^t_\Caff$ is globally defined on $\T\Caff^{\ast}$ and is given by 
\[F^t_\Caff (z,u) = (z+tu,u).\]
However, on an arbitrary affine surface $\Saff$, the geodesic flow is usually not globally defined: $\T\Saff^*$ decomposes into 
\begin{itemize}
    \item the regular locus 
    \[\reg(\Saff):=\bigl\{v\in \T\Saff^*~:~F^t_\Saff(v)\text{ is defined for all }t\in \R\bigr\}\] and 
    \item the bifurcation locus 
    \[\bif(\Saff):=\T\Saff^*\ssm \reg(\Saff)\] which is the complement of the regular locus.
\end{itemize}
The restriction of the geodesic flow to the regular locus $\reg(\Saff)$ is complete by definition. 

Furthermore, given $\tau\in (0,+\infty)$ (respectively $\tau\in (-\infty,0)$), we may also consider the set $\bif^\tau(\Saff)\subset \bif(\Saff)$ of tangent vectors such that $F^t_\Saff(v)$ is defined for $t\in [0,\tau)$ (respectively for $t\in (\tau,0]$) but not for $t=\tau$.

The regular locus and the bifurcation locus are conjugacy invariants: if $\Phi:\T\Saff_1^*\to \T\Saff_2^*$ conjugates the geodesic flow on $\Saff_1$ to the geodesic flow on $\Saff_2$, then 
\[\Phi\bigl(\reg(\Saff_1)\bigr) = \reg(\Saff_2), \quad \Phi\bigl(\bif(\Saff_1)\bigr) = \bif(\Saff_2).\]
In addition, 
\[\forall \tau\in \R\ssm \{0\},\quad \Phi\bigl(\bif^\tau(\Saff_1)\bigr) = \bif^\tau(\Saff_2).\]

\subsection{The geodesic flow on $\Cdil$}

Remember that $\Cdil$ is the affine structure on $\C$, whose affine charts are the restrictions of the map $\exp(z)$. Note that $z:\Cdil\to \C$ is a holomorphic isomorphism, but not an affine isomorphism. Nevertheless, it induces a natural holomorphic isomorphism between the tangent space $\T\Cdil$ and $\C^2$, the canonical projection $\T\Cdil \to \Cdil$ identifying to $(z,u)\mapsto z$. 

By definition of $\Cdil$, the map $\exp:\Cdil \to \Caff\ssm \{0\}$ is a universal cover of affine surfaces. The corresponding bundle map $\T\Cdil\to \T(\Caff\ssm \{0\})$ identifies to $(z,u)\mapsto \bigl(\exp(z),\exp(z) u\bigr)$. 
We have that 
\[\bigl(\exp(z) + t \exp(z) u, \exp(z)u) = \left(\exp \bigl(z + \log(1+tu)\bigr),\exp \bigl(z + \log(1+tu)\bigr)\frac{u}{1+tu}\right).\]
It follows that the geodesic flow on $\Cdil$ identifies with 
\[F^t_\Cdil:(z,u)\mapsto \left(z+\log(1+tu),\frac{u}{1+tu}\right).\]
As a consequence, we have the identifications
\[\forall \tau \in \R\ssm \{0\},\quad \bif^\tau(\Cdil)\simeq \bigl\{(z,u)\in \T\Cdil~:~1+\tau u = 0\bigr\} = \left\{(z,u)\in \T\Cdil~:~u = -\frac1\tau\right\},\]
\[\bif(\Cdil)\simeq \bigl\{(z,u)\in \T\Cdil~:~u\in \R\ssm \{0\}\bigr\}\and \reg(\Cdil) \simeq \bigl\{(z,u)\in \T\Cdil~:~u\in \C\ssm \R\bigr\}.\]
In particular, the regular locus $\reg(\Cdil)$ is open and has two connected components 
\[\reg^+(\Cdil)\simeq \bigl\{(z,u)\in \T\Cdil~:~\Im(u)>0\bigr\}\and \reg^-(\Cdil)\simeq \bigl\{(z,u)\in \T\Cdil~:~\Im(u)<0\bigr\}.\]
Note that each connected component $\reg^\pm(\Cdil)$ is invariant by the geodesic flow. 
In addition, the bifurcation locus $\bif(\Cdil)$ is a real $3$-dimensional manifold foliated by the affine surfaces 
\[\Cdil^\tau:= \bif^\tau (\Cdil), \quad \tau \in \R\ssm \{0\},\] and the canonical projection $\T\Cdil\to \Cdil$ induces an affine isomorphism  $\Cdil^\tau\to \Cdil$ for each $\tau \in \R\ssm \{0\}$. 

Let us now assume that $\tau_1\in (-\infty,0)$ and $\tau_2\in (0,+\infty)$ and set $t:=\tau_2-\tau_1$. If $v:=(z,u)\in \Cdil^{\tau_2}$, then $F_\Cdil^t(v)$ is not defined. However, $F_\Cdil^t(v')$ is well defined for any $v'\in \reg(\Cdil)$ and $F_\Cdil^t(v')$ has a limit $F^{t,+}_\Cdil(v)\in \Cdil^{\tau_1}$ as $v'\to v$ with $v'\in \reg^+(\Cdil)$. This limit is given by 
\[F^{t,+}_\Cdil\left(z,-\frac1{\tau_2}\right) = \left(z + \log \left(-\frac{\tau_1}{\tau_2}\right) +\pi {\rm i}, -\frac1{\tau_1}\right).\]
Similarly, $F_\Cdil^t(v')$ has a limit $F^{t,-}_\Cdil(v)\in \Cdil^{\tau_1}$ as $v'\to v$ with $v'\in \reg^-(\Cdil)$. This limit is given by 
\[F^{t,-}_\Cdil\left(z,-\frac1{\tau_2}\right) = \left(z + \log \left(-\frac{\tau_1}{\tau_2}\right) -\pi {\rm i}, -\frac1{\tau_1}\right).\]

\subsection{The geodesic flow on quotients of $\Cdil$}

The group $\aut(\Cdil)$ of affine automorphisms of $\Cdil$ is abelian and consists of the affine maps $z\mapsto z+\mu$ with $\mu\in \C$. A discrete subgroup of $\aut(\Cdil)$ is therefore either trivial, or of rank $1$ or $2$. Let $\Gamma$ be such a subgroup and consider the affine surface $\Saff:=\Cdil/\Gamma$.  

If $\gamma\in \Gamma$ and $(z,u)\in \T\Cdil$, then ${\rm D}\gamma(z,u) = (\gamma(z),u)$. In particular, the regular locus $\reg(\Cdil)$ and the bifurcation locus $\bif(\Cdil)$ are invariant by the action of $\Gamma$, as well as $\reg^\pm(\Cdil)$ and $\Cdil^\tau$ for $\tau\in \R\ssm \{0\}$. We have that 
\[\reg(\Saff) = \reg(\Cdil)/\Gamma,\quad \bif(\Saff) = \bif(\Cdil)/\Gamma\]
and
\[\forall \tau \in \R\ssm \{0\},\quad \Saff^\tau:=\bif^\tau(\Saff) = \Cdil^\tau/\Gamma.\]
Note that the regular locus $\reg(\Saff)$ has two connected components $\reg^\pm(\Saff):= \reg^\pm(\Cdil)/\Gamma$. And each connected component is invariant by the geodesic flow $F^t_\Saff$. 
In addition, the bifurcation locus $\bif(\Saff)$ is a real $3$-dimensional manifold foliated by the affine surfaces $\Saff^\tau$ and the canonical projection $\pi_\Saff:\T\Saff\to \Saff$ induces an affine isomorphism  $\Saff^\tau \to \Saff$ for each $\tau\in \R\ssm \{0\}$.

Assume $\tau_1\in (-\infty,0)$ and $\tau_2\in (0,+\infty)$. Set $t:=\tau_2-\tau_1$. Then, the affine isomorphism
$F_\Cdil^{t,\pm}:\Cdil^{\tau_2}\to \Cdil^{\tau_1}$ induces an affine isomorphism
$F_\Saff^{t,\pm}:\Saff^{\tau_2}\to \Saff^{\tau_1}$. We have that 
\[\forall v\in \Saff^{\tau_1},\quad F_\Saff^{t,\pm}(v) = \lim_{v'\to v\atop v'\in \reg^\pm(\Saff)} F^t_\Saff(v').\]

\section{Conjugacy invariants}\label{sec:Invariant}

We still assume that $\Gamma$ is a discrete subgroup of $\aut(\Cdil)$ and we consider the affine surface $\Saff:=\Cdil/\Gamma$. Let $h_\Saff^\pm:\Saff\to \Saff$ be the affine automorphism of $\Saff$ induced by the affine automorphism $z\mapsto z\pm 2\pi {\rm i}$ of $\Cdil$. We have that $h_{\Saff}^+ = h_\Saff$ and $h_\Saff^- = h_\Saff^{-1}$. 
Note that the derivative ${\rm D}h_\Saff^\pm:\T\Saff\to \T\Saff$ is an automorphism which preserves $\reg(\Saff)$ and $\bif(\Saff)$. It even preserves $\reg^+(\Saff)$, $\reg^-(\Saff)$ and $\bif^\tau(\Saff)$ for all $\tau\in \R\ssm \{0\}$. 

\begin{prop}
Let $\Gamma_1$ and $\Gamma_2$ be discrete subgroups of $\aut(\Cdil)$. Set $\Saff_1 := \Cdil/\Gamma_1$ and $\Saff_2 := \Cdil/\Gamma_2$. If $\Phi:\T\Saff_1^*\to \T\Saff_2^*$ is a conjugacy between the geodesic flows, then 
\begin{itemize}
\item $\Phi\bigl(\reg(\Saff_1)\bigr) = \reg(\Saff_2)$ and $\Phi\bigl(\bif(\Saff_1)\bigr) = \bif(\Saff_2)$;
\item if $\Phi\bigl(\reg^+(\Saff_1)\bigr) = \reg^+(\Saff_2)$, then $\Phi\circ {\rm D}h_{\Saff_1}^+ = {\rm D}h_{\Saff_2}^+\circ \Phi$ on $\bif(\Saff_1)$; 
\item if $\Phi\bigl(\reg^+(\Saff_1)\bigr) = \reg^-(\Saff_2)$, then $\Phi\circ {\rm D}h_{\Saff_1}^+ = {\rm D}h_{\Saff_2}^-\circ \Phi$ on $\bif(\Saff_1)$.
\end{itemize}
\end{prop}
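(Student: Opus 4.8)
The plan is to prove the three assertions in order, using that the regular/bifurcation decomposition and each $\bif^\tau$ are conjugacy invariants, which is already recorded in Section~\ref{sec:Geodesicflow}. The first bullet is immediate: since $\reg(\Saff)$ is characterized purely dynamically as the set of $v$ for which $F^t_\Saff(v)$ is defined for all $t\in\R$, any conjugacy $\Phi$ must send $\reg(\Saff_1)$ to $\reg(\Saff_2)$, hence $\bif(\Saff_1)$ to $\bif(\Saff_2)$. This is exactly the statement already asserted in the excerpt, so I would simply invoke it.

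\medskip

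For the second and third bullets, the key idea is to reconstruct $\mathrm{D}h_\Saff^{\pm}$ from the geodesic flow using the limiting maps $F_\Saff^{t,\pm}$. First I would note that $\Phi$, being a homeomorphism of $\T\Saff_1^*$ preserving the two-component regular locus, either preserves or swaps the components $\reg^{\pm}$; these two cases correspond precisely to the hypotheses of the two bullets. Next, fix $\tau_1\in(-\infty,0)$ and $\tau_2\in(0,+\infty)$ and set $t:=\tau_2-\tau_1$. The crucial observation, to be extracted from the explicit formulas in Section~\ref{sec:Geodesicflow}, is that the composition $F^{t,-}_\Saff\circ\bigl(F^{t,+}_\Saff\bigr)^{-1}:\Saff^{\tau_1}\to\Saff^{\tau_1}$ equals $\mathrm{D}h_\Saff^{\pm}$ (up to the identification $\Saff^{\tau}\simeq\Saff$ via the projection): indeed, from $F^{t,+}_\Cdil(z,-1/\tau_2)=(z+\log(-\tau_1/\tau_2)+\pi\mathrm{i},-1/\tau_1)$ and the analogous formula with $-\pi\mathrm{i}$, the two limits differ by $z\mapsto z+2\pi\mathrm{i}$, which is exactly the lift of $h_\Saff$. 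So $\mathrm{D}h_\Saff^{\pm}$ is canonically recovered as the ``monodromy'' measuring the discrepancy between approaching the bifurcation locus from $\reg^+$ versus $\reg^-$.

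\medskip

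With this reformulation in hand, the proof becomes a diagram chase. The maps $F^{t,\pm}_\Saff$ are themselves intrinsic to the geodesic flow, being defined as one-sided limits $\lim_{v'\to v,\,v'\in\reg^\pm}F^t_\Saff(v')$; hence any conjugacy $\Phi$ intertwines them with the corresponding one-sided limit maps on $\Saff_2$. In the case $\Phi(\reg^+(\Saff_1))=\reg^+(\Saff_2)$, the $+$ limit on the source is carried to the $+$ limit on the target and likewise for $-$, so $\Phi$ conjugates $F^{t,-}_{\Saff_1}\circ(F^{t,+}_{\Saff_1})^{-1}$ to $F^{t,-}_{\Saff_2}\circ(F^{t,+}_{\Saff_2})^{-1}$, that is, $\Phi\circ\mathrm{D}h^+_{\Saff_1}=\mathrm{D}h^+_{\Saff_2}\circ\Phi$ on $\bif(\Saff_1)$. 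In the swapped case $\Phi(\reg^+(\Saff_1))=\reg^-(\Saff_2)$, the $+$ and $-$ limits get exchanged on the target, so the same computation produces the inverse monodromy $\mathrm{D}h^-_{\Saff_2}=(\mathrm{D}h^+_{\Saff_2})^{-1}$ and yields $\Phi\circ\mathrm{D}h^+_{\Saff_1}=\mathrm{D}h^-_{\Saff_2}\circ\Phi$.

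\medskip

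The main obstacle I anticipate is making the ``$\Phi$ intertwines the one-sided limit maps'' step rigorous, since $\Phi$ is only assumed to be a homeomorphism (in the topological case) and a priori conjugates the flow only on the regular locus where orbits are complete; one must verify that the limits $F^{t,\pm}$ are genuinely determined by the flow dynamics in a way that passes through $\Phi$. Concretely, I would check that $\Phi$ maps $\bif^{\tau_1}(\Saff_1)$ to $\bif^{\tau_1}(\Saff_2)$ and $\reg^\pm(\Saff_1)$ to $\reg^\pm(\Saff_2)$ (or the swapped version) continuously up to the boundary, so that the one-sided limits are preserved; the continuity of $\Phi$ together with the invariance of the strata $\bif^\tau$ under $\Phi$ (recorded in Section~\ref{sec:Geodesicflow}) should suffice. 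Once this compatibility of $\Phi$ with the stratified boundary behavior is established, the rest is the formal diagram chase described above.
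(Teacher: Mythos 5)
Your proposal is correct and follows essentially the same route as the paper: both reduce the statement to the fact that $\Phi$ intertwines the one-sided limit maps $F^{t,\pm}$ (via continuity of $\Phi$, the invariance of the strata $\bif^\tau$, and the preservation or swap of $\reg^\pm$), and then recover ${\rm D}h^\pm_\Saff$ as the monodromy composition of $F^{t,+}$ and $(F^{t,-})^{-1}$. The only blemish is a harmless sign slip --- $F^{t,-}_\Saff\circ\bigl(F^{t,+}_\Saff\bigr)^{-1}$ on $\Saff^{\tau_1}$ is ${\rm D}h^-_\Saff$, not ${\rm D}h^+_\Saff$ --- which does not affect the conclusion, since conjugating an automorphism is equivalent to conjugating its inverse.
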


\begin{proof}
Assume $v\in \T\Saff_1^*$. 
Since $\Phi$ conjugates the geodesic flow on $\Saff_1$ to the geodesic flow on $\Saff_2$, $F^t_{\Saff_1}(v)$ is defined for all $t\in \R$ if and only $F^t_{\Saff_2}\circ \Phi(v)$ is defined for all $t\in \R$. In other words, $v\in \reg(\Saff_1)$ if and only if $\Phi(v)\in \reg(\Saff_2)$, and so $v\in \bif(\Saff_1)$ if and only if $\Phi(v)\in \bif(\Saff_2)$

Let us now assume that $\Phi\bigl(\reg^+(\Saff_1)\bigr) = \reg^+(\Saff_2)$. Assume $\tau_1\in (-\infty,0)$ and $\tau_2\in (0,+\infty)$. Set $t := \tau_2-\tau_1$ and fix $v\in \Saff_1^{\tau_2}$. If $v'\in \reg^\pm(\Saff_1)$, then $\Phi(v')\in \reg^\pm(\Saff_2)$. In addition, as $v'$ tends to $v\in \Saff_1^{\tau_2}$, we have that $\Phi(v')$ tends to $\Phi(v)\in \Saff_2^{\tau_2}$. As a consequence, 
\[\Phi\circ F_{\Saff_1}^{t,\pm}(v) = \lim_{v'\to v\atop v'\in \reg^\pm(\Saff_1)} \Phi\circ F^t_{\Saff_1}(v') = \lim_{v'\to v\atop v'\in \reg^\pm(\Saff_1)} F^t_{\Saff_2}\circ \Phi(v') = F_{\Saff_2}^{t,\pm}\circ \Phi(v).\]
As a consequence, we have the following commutative diagrams:
\[
\begin{tikzcd}
\Saff_1^{\tau_2} \arrow[r, "\Phi"] \arrow[d, "F_{\Saff_1}^{t,+}"'] \arrow[dd,"{\rm D}h_{\Saff_1}^+"',bend right=80]
& \Saff_2^{\tau_2} \arrow[d, "F_{\Saff_2}^{t,+}"] \arrow[dd,"{\rm D}h_{\Saff_2}^+",bend left=80] \\
\Saff_1^{\tau_1} \arrow[r, "\Phi"] 
& \Saff_2^{\tau_1} \\
\Saff_1^{\tau_2} \arrow[r, "\Phi"] \arrow[u, "F_{\Saff_1}^{t,-}"]
& \Saff_2^{\tau_2} \arrow[u, "F_{\Saff_2}^{t,-}"']
\end{tikzcd}
\and
\begin{tikzcd}
\Saff_1^{\tau_1} \arrow[r, "\Phi"] \arrow[dd,"{\rm D}h_{\Saff_1}^+"',bend right=80]
& \Saff_2^{\tau_1} \arrow[dd,"{\rm D}h_{\Saff_2}^+",bend left=80] \\
\Saff_1^{\tau_2} \arrow[r, "\Phi"] \arrow[u, "F_{\Saff_1}^{t,-}"] \arrow[d, "F_{\Saff_1}^{t,+}"']
& \Saff_2^{\tau_2} \arrow[u, "F_{\Saff_2}^{t,-}"'] \arrow[d, "F_{\Saff_2}^{t,+}"] \\
\Saff_1^{\tau_1} \arrow[r, "\Phi"]  
& \Saff_2^{\tau_1}.
\end{tikzcd}
\]
This proves that $\Phi\circ {\rm D}h_{\Saff_1}^+ = {\rm D}h_{\Saff_2}^+\circ \Phi$ on $\Saff_1^{\tau_2}$ and on $\Saff_1^{\tau_1}$. 
Since this holds for any $\tau_1\in (-\infty,0)$ and any $\tau_2\in (0,+\infty)$, this shows that $\Phi\circ {\rm D}h_{\Saff_1}^+ = {\rm D}h_{\Saff_2}^+\circ \Phi$ on $\bif(\Saff_1)$ as required. 

Let us finally assume that $\Phi\bigl(\reg^+(\Saff_1)\bigr) = \reg^-(\Saff_2)$. Again, assume $\tau_1\in (-\infty,0)$ and $\tau_2\in (0,+\infty)$. Set $t := \tau_2-\tau_1$ and fix $v\in \Saff_1^{\tau_2}$. Arguing as above, we deduce that we have the following commutative diagrams:
\[
\begin{tikzcd}
\Saff_1^{\tau_2} \arrow[r, "\Phi"] \arrow[d, "F_{\Saff_1}^{t,+}"'] \arrow[dd,"{\rm D}h_{\Saff_1}^+"',bend right=80]
& \Saff_2^{\tau_2} \arrow[d, "F_{\Saff_2}^{t,-}"] \arrow[dd,"{\rm D}h_{\Saff_2}^-",bend left=80] \\
\Saff_1^{\tau_1} \arrow[r, "\Phi"] 
& \Saff_2^{\tau_1} \\
\Saff_1^{\tau_2} \arrow[r, "\Phi"] \arrow[u, "F_{\Saff_1}^{t,-}"]
& \Saff_2^{\tau_2} \arrow[u, "F_{\Saff_2}^{t,+}"']
\end{tikzcd}
\and
\begin{tikzcd}
\Saff_1^{\tau_1} \arrow[r, "\Phi"] \arrow[dd,"{\rm D}h_{\Saff_1}^+"',bend right=80]
& \Saff_2^{\tau_1} \arrow[dd,"{\rm D}h_{\Saff_2}^-",bend left=80] \\
\Saff_1^{\tau_2} \arrow[r, "\Phi"] \arrow[u, "F_{\Saff_1}^{t,-}"] \arrow[d, "F_{\Saff_1}^{t,+}"']
& \Saff_2^{\tau_2} \arrow[u, "F_{\Saff_2}^{t,+}"'] \arrow[d, "F_{\Saff_2}^{t,-}"] \\
\Saff_1^{\tau_1} \arrow[r, "\Phi"]  
& \Saff_2^{\tau_1},
\end{tikzcd}
\]
which shows that $\Phi\circ {\rm D}h_{\Saff_1}^+ = {\rm D}h_{\Saff_2}^-\circ \Phi$ on $\bif(\Saff_1)$ as required. 
\end{proof}

\begin{cor}
Let $\Saff_1 = \Cdil/\Gamma_1$ and $\Saff_2 = \Cdil/\Gamma_2$ be affine quotients of $\Cdil$. If the geodesic flow on $\Saff_1$ is topologically/holomorphically conjugate to the geodesic flow on $\Saff_2$, then the automorphisms $h_{\Saff_1}:\Saff_1\to \Saff_1$ is topologically/holomorphically conjugate to the automorphism $h_{\Saff_2}:\Saff_2\to \Saff_2$. 
\end{cor}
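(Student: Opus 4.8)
The plan is to descend the conjugacy $\Phi$ from one fixed slice of the bifurcation locus to the surfaces themselves, using the affine isomorphisms $\pi_{\Saff_j}\colon \Saff_j^\tau \to \Saff_j$ induced by the canonical projection. The preceding proposition has already done the substantial work on $\bif(\Saff_1)$; what remains is to translate the intertwining of the derivatives ${\rm D}h_{\Saff_j}^\pm$ into an intertwining of the maps $h_{\Saff_1}$ and $h_{\Saff_2}$, and to handle the possibility that $\Phi$ exchanges the two components $\reg^\pm$.

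First I would fix some $\tau \in \R\ssm\{0\}$ and record the elementary observation that the projection $\pi_{\Saff_j}$ conjugates the restriction of ${\rm D}h_{\Saff_j}^\pm$ to the slice $\Saff_j^\tau$ with $h_{\Saff_j}^\pm$: in the $(z,u)$-coordinates ${\rm D}h_{\Saff_j}^\pm(z,u) = (z\pm 2\pi{\rm i},u)$ preserves $\Saff_j^\tau = \{u=-1/\tau\}$, and projecting by $(z,u)\mapsto z$ recovers $z\mapsto z\pm2\pi{\rm i}$, which is exactly $h_{\Saff_j}^\pm$. Since $\Saff_j^\tau$ is a complex submanifold of $\T\Saff_j^*$ contained in the open set where $\Phi$ is continuous (resp.\ holomorphic), and $\Phi(\Saff_1^\tau)=\Saff_2^\tau$, the restriction $\Phi|_{\Saff_1^\tau}\colon \Saff_1^\tau\to\Saff_2^\tau$ is a homeomorphism (resp.\ biholomorphism). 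I then set
\[\psi := \pi_{\Saff_2}\circ \Phi|_{\Saff_1^\tau}\circ \pi_{\Saff_1}^{-1}\colon \Saff_1\to\Saff_2,\]
a homeomorphism (resp.\ biholomorphism).

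Because $\Phi$ restricts to a homeomorphism $\reg(\Saff_1)\to\reg(\Saff_2)$, the image $\Phi(\reg^+(\Saff_1))$ is a connected component of $\reg(\Saff_2)$, hence equals $\reg^+(\Saff_2)$ or $\reg^-(\Saff_2)$; accordingly the proposition applies in one of its two forms. If $\Phi(\reg^+(\Saff_1))=\reg^+(\Saff_2)$, then composing the three conjugacy relations above shows that $\psi$ conjugates $h_{\Saff_1}$ to $h_{\Saff_2}$ and we are finished. If instead $\Phi(\reg^+(\Saff_1))=\reg^-(\Saff_2)$, the same composition only yields that $\psi$ conjugates $h_{\Saff_1}$ to $h_{\Saff_2}^- = h_{\Saff_2}^{-1}$.

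The one genuine obstacle is thus the sign ambiguity in the second case, and I would resolve it with the involution $\iota\colon\Sp_2\to\Sp_2$ induced by $z\mapsto -z$ on $\Cdil$. This descends to the quotient because $\iota$ normalizes $\Gamma_2$ (conjugating the translation by $\mu$ to the translation by $-\mu$, and $\Gamma_2$ is symmetric), and it is a biholomorphism, so in particular a homeomorphism. A direct computation gives $\iota\circ h_{\Saff_2}\circ\iota^{-1} = h_{\Saff_2}^{-1}$, so that $\iota^{-1}\circ\psi$ conjugates $h_{\Saff_1}$ to $h_{\Saff_2}$ in both categories. The crucial point making this correction legitimate in the holomorphic setting is that $\iota$ is holomorphic rather than merely anti-holomorphic; this completes the argument in either case.
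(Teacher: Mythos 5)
Your argument is correct and follows essentially the same route as the paper's proof: restrict $\Phi$ to a slice $\Saff_1^\tau\to\Saff_2^\tau$, descend via the canonical projections to get a conjugacy between $h_{\Saff_1}$ and $h_{\Saff_2}^{\pm 1}$, and use the holomorphic involution induced by $z\mapsto -z$ to correct the sign in the second case. The only cosmetic difference is that the paper introduces the involution at the outset and you invoke it at the end.
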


\begin{proof}
Let $\Saff_1 = \Cdil/\Gamma_1$ and $\Saff_2 = \Cdil/\Gamma_2$ be affine quotients of $\Cdil$ and assume $\Phi:\T\Saff_1^*\to \T\Saff_2^*$ is a topological/holomorphic conjugacy between the geodesic flows. 

Let $\iota:\Saff_2\to \Saff_2$ be the involution induced by $z\mapsto -z$ on $\Cdil$. Note that $\iota$ is not an affine automorphism, but it is a holomorphic automorphism. In addition, $\iota$ conjugates $h_{\Saff_2}^-:\Saff_2\to \Saff_2$ to $h_{\Saff_2}^+:\Saff_2\to \Saff_2$. It is therefore enough to prove the existence of a topological/holomorphic conjugacy $\phi:\Saff_1\to \Saff_2$ between $h_{\Saff_1}^+:\Saff_1\to \Saff_1$ and either $h_{\Saff_2}^+:\Saff_2\to \Saff_2$ or $h_{\Saff_2}^-:\Saff_2\to \Saff_2$. 

Fix $\tau\in \R\ssm \{0\}$. On the one hand, $\Phi$ restricts to a homeomorphism $\Saff_1^\tau\to \Saff_2^\tau$ which conjugates ${\rm D}h_{\Saff_1}^+$ to either ${\rm D}h_{\Saff_2}^+$ or to ${\rm D}h_{\Saff_2}^-$. On the other hand, the canonical projections $\T\Saff_1\to \Saff_1$ and $\T\Saff_2\to \Saff_2$ restrict to affine automorphisms $\Saff_1^\tau\to \Saff_1$ and $\Saff_2^\tau\to \Saff_2$. Let $\phi:\Saff_1\to \Saff_2$ be the homeomorphism induced by $\Phi:\Saff_1^\tau\to \Saff_2^\tau$. Then, $\phi$ conjugates $h_{\Saff_1}^+$ to either $h_{\Saff_2}^+$ or to $h_{\Saff_2}^-$. If $\Phi$ is holomorphic, then $\phi$ is holomorphic too. 
\end{proof}

\begin{lem}\label{lem:Inversion}
Given a pair of affine cylinders or tori $\Saff_1$ and $\Saff_2$, if there exists homeomorphisms (resp. biholomorphisms) from $\Saff_1$ to $\Saff_2$ that conjugate $h_{\Saff_1}$ to $h_{\Saff_2}$, then there exists homeomorphisms (resp. biholomorphisms) that conjugate $h_{\Saff_1}$ to $h_{\Saff_2}^{-1}$.
\end{lem}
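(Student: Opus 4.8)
The plan is to reduce the lemma to a single structural feature of the quotients $\Cdil/\Gamma$, namely the holomorphic involution already exploited in the proof of the preceding corollary. For an affine quotient $\Saff = \Cdil/\Gamma$ I would first observe that the map $z\mapsto -z$ on $\Cdil$ normalizes every discrete subgroup $\Gamma$ of $\aut(\Cdil)$: since the elements of $\Gamma$ are translations $z\mapsto z+\mu$, conjugating one of them by $z\mapsto -z$ gives the translation $z\mapsto z-\mu$, which again belongs to $\Gamma$. Consequently $z\mapsto -z$ descends to a well-defined biholomorphic involution $\iota_\Saff:\Saff\to\Saff$ (a holomorphic, though not affine, automorphism).

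The key point I would isolate next is that $\iota_\Saff$ intertwines $h_\Saff$ with $h_\Saff^{-1}$. At the level of $\Cdil$, conjugating the translation $z\mapsto z+2\pi{\rm i}$ by $z\mapsto -z$ yields $z\mapsto z-2\pi{\rm i}$; passing to the quotient this reads $\iota_\Saff\circ h_\Saff\circ \iota_\Saff^{-1} = h_\Saff^{-1}$, i.e. $\iota_\Saff\circ h_\Saff = h_\Saff^{-1}\circ \iota_\Saff$. This is exactly the relation recorded earlier, that $\iota$ conjugates $h_\Saff^-$ to $h_\Saff^+$.

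Granting these two facts, the conclusion follows by a one-line composition. Given a homeomorphism (resp. biholomorphism) $\phi:\Saff_1\to\Saff_2$ with $\phi\circ h_{\Saff_1} = h_{\Saff_2}\circ \phi$, I would set $\psi := \iota_{\Saff_2}\circ \phi$ and compute
\[
\psi \circ h_{\Saff_1} = \iota_{\Saff_2}\circ \phi \circ h_{\Saff_1} = \iota_{\Saff_2}\circ h_{\Saff_2}\circ \phi = h_{\Saff_2}^{-1}\circ \iota_{\Saff_2}\circ \phi = h_{\Saff_2}^{-1}\circ \psi.
\]
Since $\iota_{\Saff_2}$ is a biholomorphism, $\psi$ is a homeomorphism, and is biholomorphic whenever $\phi$ is, so $\psi$ is a conjugacy of the required type. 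Equivalently, one could pre-compose on the source and take $\psi := \phi\circ \iota_{\Saff_1}$, using $\phi\circ h_{\Saff_1}^{-1} = h_{\Saff_2}^{-1}\circ \phi$.

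I do not anticipate any genuine obstacle: the whole argument rests on the existence of the involution $\iota_\Saff$ and the elementary verification that it conjugates $h_\Saff$ to $h_\Saff^{-1}$. The only points requiring a moment's care are that $z\mapsto -z$ really descends to the quotient (which is why I check that it normalizes $\Gamma$) and that it is holomorphic — both automatic here and both already used in the proof of the corollary.
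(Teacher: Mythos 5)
Your proposal is correct and follows exactly the paper's argument: the paper's proof also observes that $\sigma(z)=-z$ on $\Cdil$ descends to a biholomorphic involution of any quotient $\Cdil/\Gamma$ conjugating $h_{\Saff}$ to $h_{\Saff}^{-1}$, and then composes with the given conjugacy. You simply spell out the normalization of $\Gamma$ and the final composition, which the paper leaves implicit.
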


\begin{proof}
We just have to observe that $\sigma:\Cdil\longrightarrow\Cdil$ where $\sigma(u)=-u$ induces a biholomorphism $\tilde{\sigma}$ on any quotient surface $\Cdil/\Gamma$ that conjugates $h_{\Saff}$ with $h_{\Saff}^{-1}$.
\end{proof}

\section{Existence of conjugacies}\label{sec:Explicit}

Consider two affine surface $\Saff_1 = \Cdil/\Gamma_1$ and $\Saff_2 = \Cdil/\Gamma_2$. Let $\pi_1:\Cdil \to \Saff_1:=\Cdil/\Gamma_1$ and $\pi_2:\Cdil \to \Saff_2:=\Cdil/\Gamma_2$ be the canonical projections. Those are universal coverings.

Note that the group of automorphisms $\aut(\Cdil)$ is abelian. As a consequence, for each $\mu\in \C$, the automorphism $H_\mu:z\mapsto z+\mu$ of $\Cdil$ induces automorphisms $h_{1,\mu}\in \aut(\Saff_1)$ and $h_{2,\mu}\in \aut(\Saff_2)$ such that 
\[\pi_1\circ H_\mu = h_{1,\mu}\circ \pi_1\and \pi_2\circ H_\mu = h_{2,\mu}\circ \pi_2.\]
Note that 
\[\forall (\mu,\nu)\in \C^2,\quad h_{1,\mu+\nu} = h_{1,\mu}\circ h_{1,\nu}\and h_{2,\mu+\nu} = h_{2,\mu}\circ h_{2,\nu}.\]

In this section, we assume that $h_1:=h_{1,2\pi {\rm i}}:\Saff_1\to \Saff_1$ is topologically/holomorphically conjugate to $h_2:=h_{2,2\pi {\rm i}}:\Saff_2\to \Saff_2$, and we deduce that the geodesic flow on $\Saff_1$ is topologically/holomorphically conjugate to the geodesic flow on $\Saff_2$. 
So, let us assume that $\phi:\Saff_1\to \Saff_2$ is a homeomorphism (possibly a holomorphic isomorphism) which conjugates $h_1:\Saff_1\to \Saff_1$ to $h_2:\Saff_2\to \Saff_2$. 

Given $\mu\in \C$, let $\phi_\mu:\Saff_1\to \Saff_2$ be the homeomorphism defined by 
\[\phi_\mu := h_{2,\mu}^{-1}\circ \phi\circ h_{1,\mu}:\Saff_1\to \Saff_2.\]
Note that $\phi_1 = \phi$ and for all $\mu\in \C$,
\[\forall \mu\in \C, \quad \phi_{\mu+2\pi{\rm i}} = h_{2,\mu}^{-1}\circ h_2^{-1}\circ \phi\circ h_1\circ h_{1,\mu} = h_{2,\mu}^{-1}\circ \phi\circ h_{1,\mu} = \phi_\mu.\]
As a consequence, we can define a homeomorphism $\Psi:\T\Saff_1^*\to \T\Saff_2^*$ such that 
    \[\forall (z,u)\in \T\Saff_1^*,\quad \Psi(z,u) = \bigl(\phi_{\log u}(z),u\bigr) = \bigl(\phi(z+\log u)-\log u,u\bigr).\]
The definition does not depend on the choice of branch of $\log u$. In addition, if $\phi$ is holomorphic, then $\Psi:\T\Saff_1^*\to \T\Saff_2^*$ is a holomorphic isomorphism. 

We will now show that $\Psi$ conjugates the geodesic flow on $\Saff_1$ to the geodesic flow on $\Saff_2$, which completes the proof of Theorem \ref{thm:main}. This is immediate since
\begin{eqnarray*}
    \Psi\circ {\cal F}^t_{\Saff_1}(z,u) &=& \Psi\left(z + \log(1+tu),\frac{u}{1+tu}\right)\\
    &=& \left(\phi\left(z + \log(1+tu)+\log\frac{u}{1+tu}\right) - \log \frac{u}{1+tu},\frac{u}{1+tu}\right) \\
    & =&  \left(\phi\left(z + \log u\right) - \log u + \log (1+tu),\frac{u}{1+tu}\right)\\
    &=& {\cal F}^t_{\Saff_2}\left(\phi\left(z + \log u\right) - \log u ,u\right) \\
    &=& {\cal F}^t_{\Saff_2}\circ \Psi(z,u).
\end{eqnarray*}

\section{The case of affine cylinders}\label{sec:cylinders}

In this section, we prove Proposition~\ref{prop:cylinders}. 
Consider the affine cylinders $\Saff_1:=\Cdil/\mu_1\Z$ and $\Saff_2:=\Cdil/\mu_2\Z$ with $\mu_1$ and $\mu_2$ in $\C\ssm \{0\}$. 

Every homeomorphism  $\phi:\Saff_1\to \Saff_2$ lifts to a homeomorphism $\Phi:\Cdil\to \Cdil$ which satisfies $\Phi(z+\mu_1) = \Phi(z) \pm \mu_2$. 
In addition, such a $\phi$ conjugates $h_{\Saff_1}$ to $h_{\Saff_2}$ if and only if $\Phi(z+2\pi{\rm i}) = \Phi(z) + 2\pi {\rm i} + k\mu_2$ for some integer $k\in \Z$. 

\subsection{Holomorphic conjugacy}

If $\phi$ is a holomorphic isomorphism, then 
\[\Phi(z) = \pm \frac{\mu_2}{\mu_1} z + \tau \quad \text{with}\quad \tau\in \C. \]
And $\phi$ conjugates $h_{\Saff_1}$ to $h_{\Saff_2}$ if and only if 
\[\forall z\in \Cdil, \quad \pm\frac{\mu_2}{\mu_1} (z +2\pi {\rm i}) +\tau = \pm\frac{\mu_2}{\mu_1} z + \tau + 2\pi {\rm i} + k\mu_2\quad \text{with}\quad k\in \Z,\]
i.e., if and only if 
\[\frac{2\pi {\rm i}}{\mu_2} \pm \frac{2\pi {\rm i}}{\mu_1}\in \Z.\]
As a consequence, if there exists a holomorphic isomorphism $\phi:\Saff_1\to \Saff_2$ which conjugates $h_{\Saff_1}$ to $h_{\Saff_2}$, then $\frac{2\pi {\rm i}}{\mu_2} \pm \frac{2\pi {\rm i}}{\mu_1}\in \Z$. 
Conversely, if $\frac{2\pi {\rm i}}{\mu_2} - \frac{2\pi {\rm i}}{\mu_1}\in \Z$, then the holomorphic isomorphism $\phi:\Saff_1\to \Saff_2$ defined by $\phi(z) = \frac{\mu_2}{\mu_1} z$ conjugates $h_{\Saff_1}$ to $h_{\Saff_2}$ and if $\frac{2\pi {\rm i}}{\mu_2} + \frac{2\pi {\rm i}}{\mu_1}\in \Z$, then the holomorphic isomorphism $\phi:\Saff_1\to \Saff_2$ defined by $\phi(z) = -\frac{\mu_2}{\mu_1} z$ conjugates $h_{\Saff_1}$ to $h_{\Saff_2}$. 

According to Theorem \ref{thm:main}, the geodesic flow on $\Saff_1$ is holomorphically conjugate to the geodesic flow on $\Saff_2$ if and only if $\frac{2\pi {\rm i}}{\mu_2} \pm \frac{2\pi {\rm i}}{\mu_1}\in \Z$. 

\subsection{Topological conjugacy}

Let us first assume that $\phi:\Saff_1\to \Saff_2$ is a homeomorphism which conjugates $h_{\Saff_1}$ to $h_{\Saff_2}$. Let $\Phi:\Cdil\to \Cdil$ be a lift which satisfies 
\[\Phi(x+\mu_1) = \Phi(x)\pm \mu_2\quad \text{and}\quad \Phi(x+2\pi{\rm i}) = \Phi(x) + 2\pi {\rm i} + k\mu_2\quad \text{with}\quad k\in \Z.\]
If $\Re(\mu_1) = 0$, then $\alpha := \frac{2\pi {\rm i}}{\mu_1}\in \R$ and we can find sequences $(p_n)_{n\geq 0}$ and $(q_n)_{n\geq 0}$ such that $q_n \alpha - p_n\to 0$ as $n\to +\infty$, i.e., $q_n2\pi {\rm i} - p_n \mu_1\to 0$ as $n\to +\infty$. In that case 
\[\Phi(0)+q_n(2\pi {\rm i} + k \mu_2)\mp p_n \mu_2 = \Phi(q_n2\pi {\rm i} - p_n \mu_1) \underset{n\to +\infty}\longrightarrow \Phi(0).\]
We deduce that 
\[\frac{2\pi {\rm i}}{\mu_1} = \alpha = \lim_{n\to +\infty} \frac{p_n}{q_n} = \pm \frac{2\pi {\rm i}}{\mu_2} + k.\]
Thus, $\Re(\mu_2) = 0$ and 
\[\frac{2\pi {\rm i}}{\mu_2} \mp \frac{2\pi {\rm i}}{\mu_1} = - k\in \Z.\]
Similarly, if $\Re(\mu_2) = 0$, then $\Re(\mu_1) = 0$ and $\frac{2\pi {\rm i}}{\mu_2} \mp \frac{2\pi {\rm i}}{\mu_1} \in \Z$. Thus, according to Theorem \ref{thm:main} and the previous discussion regarding holomorphic conjugacies, if the geodesic flows on $\Saff_1$ and $\Saff_2$ are topologically conjugate, then either they are holomorphically conjugate, or both $\Re(\mu_1)\neq 0$ and $\Re(\mu_2)\neq 0$.

Let us now assume that both $\Re(\mu_1)\neq 0$ and $\Re(\mu_2)\neq 0$. We will now exhibit a homeomorphism $\phi:\Saff_1\to \Saff_2$ which conjugates $h_{\Saff_1}$ to $h_{\Saff_2}$. 
Note that $(\mu_1,2\pi {\rm i})$ and $(\mu_2,2\pi {\rm i})$ are bases of $\Cdil\simeq \C$ considered as a $\R$-vector space. We may therefore define a $\R$-linear homeomorphism $\Phi:\Cdil \to \Cdil$ by 
\[\forall (x,y)\in \R^2,\quad \Phi(x\mu_1 + y2\pi {\rm i}) = x\mu_2 + y2\pi {\rm i}.\]
Note that for all $z\in \Cdil$, we have that $\Phi(z+\mu_1) = \Phi(z) + \mu_2$, so that $\Phi$ induces a homeomorphism $\phi:\Saff_1\to \Saff_2$. In addition, $\Phi(z+2\pi{\rm i}) = \Phi(z) + 2\pi {\rm i}$, so that $\phi$ conjugates $h_{\Saff_1}$ to $h_{\Saff_2}$. According to Theorem \ref{thm:main}, this proves that when both $\Re(\mu_1)\neq 0$ and $\Re(\mu_2)\neq 0$, the geodesic flows on $\Saff_1$ and $\Saff_2$ are topologically conjugate. 

\section{The case of affine tori}\label{sec:tori}

In this section, we deduce Proposition~\ref{prop:tori} from Theorem~\ref{thm:main}. Let $\Gamma_1$ and $\Gamma_2$ be two lattices in $\C$. Consider the affine tori $\Saff_1:=\Cdil/\Gamma_1$ and $\Saff_2:=\Cdil/\Gamma_2$  and the complex tori $\Sp_1:=\Caff/\Gamma_1$ and $\Sp_2:=\Caff/\Gamma_2$. For $j\in \{1,2\}$, let $A_j$ be the class of $0$ in $\Sp_j$ and let $B_j$ be the class of $2\pi {\rm i}$ in $\Sp_j$. 

For $j\in \{1,2\}$, the affine automorphism $h_{\Saff_j}:\Saff_j\to \Saff_j$ may be considered as a translation on the complex torus $\Sp_j$; in addition, the group of translations of $\Sp_j$ is abelian and acts transitively on $\Sp_j$. As a consequence, $h_{\Saff_1}:\Saff_1\to \Saff_1$ is topologically/holomorphically conjugate to $h_{\Saff_1}:\Saff_1\to \Saff_1$ if and only if there exists a homeomorphism/biholomorphism $\phi:\Sp_1\to \Sp_2$ such that $\phi\circ h_{\Saff_1} = h_{\Saff_2}\circ \phi$ and $\phi(A_1) = A_2$. 

In addition, a homeomorphism/biholomorphism $\phi:\Sp_1\to \Sp_2$ sending $A_1$ to $A_2$ lifts to a homeomorphism/biholomorphism $\Phi:\C\to \C$ which fixes $0$, sends any pair of generators $(\mu_1,\nu_1)$ of $\Gamma_1$ to a pair of generators $(\mu_2,\nu_2)$ of $\Gamma_2$, and satisfies $\Phi(z+\mu_1) = \Phi(z) + \mu_2$ and $\Phi(z+\nu_1) = \Phi(z) + \nu_2$ for all $z\in \C$. In this situation, $\phi$ conjugates $h_{\Saff_1}$ to $h_{\Saff_2}$ if and only if there exists $\gamma \in \Gamma_2$ such that $\Phi(z+2\pi{\rm i}) = \Phi(z) + 2\pi {\rm i} + \gamma$ for all $z\in \C$.

\subsection{Holomorphic conjugacy}

Assume $\phi:\Sp_1\to \Sp_2$ is a holomorphic isomorphism sending $A_1$ to $A_2$ and let $\Phi:\C\to \C$ be the lift fixing $0$. Then, $\Phi(z) = \alpha z$ for some $\alpha\in \C\ssm \{0\}$.

If $\phi$ conjugates $h_{\Saff_1}$ to $h_{\Saff_2}$, then there exists $\gamma \in \Gamma_2$ such that $\Phi(z+2\pi{\rm i}) = \Phi(z) + 2\pi {\rm i} + \gamma$ for all $z\in \C$. In particular, $\Phi(2\pi{\rm i}) = 2\pi {\rm i} + \gamma$, and so, $\phi(B_1) = B_2$. 

Conversely, if $\phi$ sends $B_1$ to $B_2$, then $\alpha2\pi {\rm i} = \Phi(2\pi {\rm i}) = 2\pi{\rm i} + \gamma$ for some $\gamma\in \Gamma_2$. In that case, we have that
\[\forall z\in \C, \quad \Phi(z+2\pi{\rm i}) = \alpha(z+2\pi {\rm i}) = \alpha z + \alpha 2\pi {\rm i}  = \Phi(z) + 2\pi {\rm i} + \gamma.\]
Thus, $\phi$ conjugates $h_{\Saff_1}$ to $h_{\Saff_2}$. 

This shows that $h_{\Saff_1}$ is holomorphically conjugate to $h_{\Saff_2}$ if and only if the marked tori $(\Sp_1;A_1,B_1)$ and $(\Sp_2;A_2,B_2)$ are biholomorphic. We deduce from Theorem~\ref{thm:main} that this is the case if and only if the geodesic flows on $\Saff_1$ and $\Saff_2$ are holomorphically conjugate.

\subsection{Topological conjugacy}

Let us first assume that the marked tori $(\Sp_1;A_1,B_1)$ and $(\Sp_2;A_2,B_2)$ are real-affine equivalent, i.e., there is a real-affine diffeomorphism $\phi:\Sp_1\to \Sp_2$ sending $A_1$ to $A_2$ and $B_1$ to $B_2$. Then, $\phi:\Sp_1\to \Sp_2$ lifts to a $\R$-linear homeomorphism $L:\C\to \C$ which sends $2\pi{\rm i}$ to $2\pi {\rm i} + \gamma$ for some $\gamma\in \Gamma_2$. By linearity of $L$, we have that 
\[\forall z\in \C, \quad L(z+2\pi {\rm i}) = L(z)+L(2\pi {\rm i}) = L(z) + 2\pi {\rm i} \gamma.\]
As a consequence, $\phi$ conjugates $h_{\Saff_1}$ to $h_{\Saff_2}$.

Let us now assume that $\phi:\Sp_1\to \Sp_2$ is a homeomorphism sending $A_1$ to $A_2$ and conjugating $h_{\Saff_1}$ to $h_{\Saff_2}$. Let $\Phi:\C\to \C$ be its lift fixing $0$. Let $(\mu_1,\nu_1)$ be a pair of generators of $\Gamma_1$ and set $\mu_2:=\Phi(\mu_1)$, $\nu_2:=\Phi(\nu_1)$.
Since $\phi$ conjugates $h_{\Saff_1}$ to $h_{\Saff_2}$,
\[\exists \gamma\in \Gamma_2,\quad \forall z\in \C, \quad \Phi(z+2\pi{\rm i}) = \Phi(z) + 2\pi{\rm i} + \gamma.\]
The orbit of $A_1$ under iteration of $h_{\Saff_1}$ is either dense in $\Sp_1$, dense in a topological circle contained in $\Sp_1$, or periodic. In all case, there are sequences $(k_j)_{j\geq 0}$, $(m_j)_{j\geq 0}$ and $(n_j)_{j\geq 0}$ such that 
\[k_j 2\pi {\rm i} - m_j \mu_1 - n_j \nu_1\underset{j\to +\infty} \longrightarrow 0.\]
Then, 
\begin{equation}\label{eq:limit1}
k_j (2\pi{\rm i} + \gamma) - m_j \mu_2 - n_j\nu_2 = \Phi(k_j 2\pi {\rm i} - m_j \mu_1 - n_j \nu_1) \underset{j\to +\infty} \longrightarrow 0.
\end{equation}
Now, let $L:\C\to \C$ be the $\R$-linear homeomorphism defined by 
\[\forall (x,y)\in \R^2,\quad L(x\mu_1 + y\nu_1) := x\mu_2+y\nu_2.\]
By $\R$-linearity of $L$, we have that
\begin{equation}\label{eq:limit2}
k_jL(2\pi {\rm i}) - m_j \mu_2 - n_j\nu_2 = L(k_j 2\pi {\rm i} - m_j \mu_1 - n_j \nu_1) \underset{j\to +\infty} \longrightarrow 0.
\end{equation}
Subtracting \eqref{eq:limit1} from \eqref{eq:limit2}, we deduce that 
\[k_j\bigl(L(2\pi {\rm i})  - 2\pi {\rm i} - \gamma\bigr) \underset{j\to +\infty} \longrightarrow 0\quad \text{and thus}\quad L(2\pi {\rm i}) = 2\pi {\rm i} + \gamma.\]
As a consequence, $L:\C\to \C$ projects to a real-affine homeomorphism $\Sp_1\to \Sp_2$ which sends $A_1$ to $A_2$ and $B_1$ to $B_2$. Thus, the marked tori $(\Sp_1;A_1,B_1)$ and $(\Sp_2;A_2,B_2)$ are real-affine equivalent.

\bibliographystyle{alpha}
\bibliography{bib}

\end{document}